\title{Isometry groups of nearly Kähler manifolds}
\date{}
\author{Mateo Anarella and Michaël Liefsoens}
\address{M. Anarella, M. Liefsoens, Address: Department of mathematics, KU Leuven, Celestijnenlaan 200B 3001, Leuven, Belgium.}
\email{mateo.anarella@kuleuven.be}
\email{michael.liefsoens@kuleuven.be}
\thanks{M. Anarella has been partially supported by FWO and FNRS under EOS project G0I2222N. M. Liefsoens is supported by the Research Foundation Flanders (FWO) with project 11PG324N. }
\keywords{Nearly Kähler, homogeneous manifold, six-dimensional, isometry group}
\subjclass[2020]{53C15, 53C30}
\newtheorem{theorem}{Theorem}
\newtheorem{lemma}[theorem]{Lemma}
\theoremstyle{definition}
\newcommand{\R}{\mathbb{R}}
\newcommand{\Z}{\mathbb{Z}}
\newcommand{\C}{\mathbb{C}}
\newcommand{\quat}{\mathbb{H}}
\newcommand{\Sl}{\mathrm{SL}(2,\R)}
\newcommand{\id}{\operatorname{Id}}
\newcommand{\li}{\langle}
\newcommand{\ri}{\rangle}
\newcommand{\CP}[1][3]{\C P^#1}
\DeclareMathOperator{\vct}{Span}
\newcommand{\Jo}{J_\circ}
\newcommand{\sign}{\operatorname{sign}}
\newcommand{\Ss}{\mathbb{S}}
\newcommand{\SO}{\mathrm{SO}}
\newcommand{\U}{\mathrm U}
\newcommand{\SU}{\mathrm{SU}}
\newcommand{\su}{\mathfrak{su}}
\newcommand{\Sp}{\mathrm{Sp}}
\newcommand{\PSp}{\mathrm{PSp}}
\newcommand{\PU}{\mathrm{PU}}
\newcommand{\PSU}{\mathrm{PSU}}
\newcommand{\Ort}{\mathrm{O}}
\newcommand{\iso}{\operatorname{Iso}}
\newcommand{\isoo}{\operatorname{Iso}_o}
\newcommand\columntag[2]{#1\def\@currentlabel{#1}.\label{#2}}
\newcommand{%
    \def\svgwidth{1\columnwidth}
    \import{./images}{.pdf_tex}
}[2][1]{%
    \def\svgwidth{#1\columnwidth}
    \import{./images}{#2.pdf_tex}
}
\begin{document}

\begin{abstract}
    Through the means of an alternative and less algebraic method, an explicit expression for the isometry groups of the six-dimensional homogeneous nearly Kähler manifolds is provided. 
    %An explicit expression for the isometry groups of the six-dimensional homogeneous nearly Kähler manifolds is provided. 
    %We present an alternative way to obtain the full isometry group of a homogeneous manifold.
\end{abstract}

\maketitle
\section{Introduction}

A connected homogeneous Riemannian manifold $M$ is a Riemannian manifold such that its isometry group $G$ acts transitively on it. 
Being an orbit, it is immediately diffeomorphic to the quotient $G/H$ where $p$ is any point in $M$ and $H$ is the isotropy subgroup associated to $p$.
By connectivity, we may write $M=G_o/H_o$ where $G_o$ and $H_o$ are the connected components of the identity. 
Because of this reduction, sometimes it is uncertain what the full isometry group is of a manifold $M$. 
When it comes to the study of submanifolds, knowing the full isometry group of the ambient space proves to be particularly useful, since it can help us to either distinguish or identify certain immersions.
To give examples of this, we recall that a strict nearly Kähler manifold is an almost Hermitian manifold $(M,g,J)$ such that the covariant derivative of $J$ is skew symmetric and non-degenerate. 
Then, in \cite{wang} the totally geodesic Lagrangian submanifolds of $\Ss^3\times\Ss^3$ were classified into six different immersions, but, with further research, the authors will later realize there were only two. 
Similar situations happened in \cite{zekesomehyper}. 
In \cite{kamil}, three congruent immersions in the nearly Kähler flag manifold were identified, but the isometry mapping one into the other was not explicitly given. 
Clearly, the isometry groups or their actions are not sufficiently known to detect these redundant submanifolds in the classification results of the above examples at the time. 
Also note that all of these examples occurred in the study of submanifolds of the homogeneous six-dimensional nearly Kähler spaces.

The homogeneous six-dimensional nearly Kähler spaces are examples of 3-symmetric spaces. Recall that a 3-symmetric space is a homogeneous manifold $G/H$, where $G$ has an automorphism $\theta$ of order three such that $G^\theta_o\subset H\subset G^\theta$, with $G^\theta=\{g\in G:\theta(g)=g\}$ and $G^\theta_o$ the connected component of the identity in $G^\theta$.
From this definition, we deduce a natural almost complex structure on $G/H$ given by $J=\frac{2}{\sqrt{3}}(\theta_*+\frac{1}{2}\id)$. In \cite{gray3symmetries}, Gray showed that $G/H$ admits a Riemannian metric $g$ such that $(G/H, g,J)$ is a homogeneous nearly Kähler manifold. 
Gray and Wolf conjectured that the converse was also true, later proven in \cite{butruille} by Butruille, where he also classified all six-dimensional homogeneous nearly Kähler manifolds: $\Ss^6$, $\Ss^3\times\Ss^3$, $\CP[3]$, and the flag manifold $F(\C^3)$.

The purpose of this article is to survey and gather some results related to isometries of the six-dimensional homogeneous nearly Kähler manifolds.
Moreover, we fill in what is not known to date and present it in an explicit and understandable way for new people in the field.
We give an expression of the full isometry group of all six-dimensional homogeneous nearly Kähler manifolds, and make clear how they act. 
We also provide a different proof than the one in \cite{shankar2001} for the isometry group of $\CP$. 
The full isometry group of $\Ss^3\times\Ss^3$ was first given in \cite{Vasquez2016}. 
In this survey, we provide a proof for the isometry group $\Ss^3\times\Ss^3$ similar to the one for $\Sl\times\Sl$ in~\cite{anarellahomogeneo}, which can be also adapted to other homogeneous nearly Kähler spaces, including the pseudo-Riemannian analogues.
% We also note that a proof of the isometry group of $\Ss^3\times\Ss^3$ is given in \cite{Vasquez2016}, but that the method presented here instead works for all six-dimensional homogeneous nearly Kähler manifolds and also for the pseudo-Riemannian analogues. 
In particular, the method also works for the nearly Kähler flag manifold, whose full isometry group is not known in the literature at the moment.

The main theorem we prove is the following.
\begin{theorem}
    The six-dimensional homogeneous nearly Kähler manifolds have the following isometry groups:
    \begin{itemize}
        \item $\iso(\Ss^6)=\Ort(7)$
        \item $\iso(\Ss^3\times\Ss^3)=\mathrm{P}(\SU(2)\times\SU(2)\times\SU(2))\rtimes S_3$
        \item $\iso(\C P^3)= \PSp(2)\rtimes \Z_2$
        \item $\iso(F(\C^3))=\PSU(3)\times S_3\rtimes \Z_2$
    \end{itemize}
where $\mathrm{P}(\SU(2)\times\SU(2)\times\SU(2))$ denotes the group $(\SU(2)\times\SU(2)\times\SU(2))/\Z_2$, $S_3$ is the symmetric group of order six, $\Sp(2) = \Sp(4, \C) \cap \U(4)$ and $\PSp(2) = \Sp(2)/\Z_2$, $\PSU(3)=\SU(3)/\Z_3$ and $\Z_n=\Z/n\Z$.
\end{theorem}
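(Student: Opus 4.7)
The plan is to handle the four manifolds separately, but with a common two-step strategy in each case: first exhibit an explicit isometric action of the listed group, and second show that no further isometries exist by analysing the isotropy at a fixed base point and exploiting the rigidity of the nearly Kähler structure.

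For $\Ss^6$ the statement is classical: the round metric realises it as a metric sphere in $\R^7$ and every isometry extends linearly to an element of $\Ort(7)$. For the remaining three spaces $M = G/H$ in Gray's $3$-symmetric presentation, the connected group $G/Z$ (with $Z$ the kernel of the action on $M$) acts isometrically by construction. The first step is to identify this with the connected isometry group $\isoo(M)$. Using that the isotropy representation of a strict nearly Kähler $6$-manifold is irreducible as a real representation, any additional infinitesimal isometry is forced to lie in the Killing algebra already accounted for by $G$, so $\isoo(M)=G/Z$ in each case.

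The full isometry group is then recovered from $\isoo(M)$ together with the isotropy at a chosen base point $p$. Any isometry fixing $p$ induces a linear orthogonal automorphism of $T_pM$ that must either preserve or negate the $3$-form $(\nabla_{\cdot}J)_p$, since $(g,\pm J)$ gives the two possible compatible nearly Kähler structures sharing the same metric and characteristic torsion up to sign. The algebraic stabiliser of this $3$-form in $\Ort(6)$ can be computed explicitly, and comparison with the isotropy of the listed group at $p$ will yield equality. Concretely, for $\Ss^3\times\Ss^3$ I would follow the method developed in \cite{anarellahomogeneo}: write the nearly Kähler metric in an invariant frame and check that the action $(a,b,c)\cdot(p,q)=(apc^{-1},bqc^{-1})$ of $\SU(2)^3$ is isometric with kernel the diagonal $\Z_2$, then verify that the triality-type maps such as $(p,q)\mapsto(q,p)$ generate the extra $S_3$ of isometries. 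For $\CP[3]$ realised as the twistor space $\Sp(2)/(\Sp(1)\times\U(1))$ over $\Ss^4$, the group $\PSp(2)$ acts manifestly and an extra $\Z_2$ is provided by quaternionic conjugation on $\quat^2$. For $F(\C^3)=\SU(3)/T^2$, the group $\PSU(3)$ acts, the Weyl group of $\SU(3)$ descends modulo $T^2$ to the $S_3$-permutation of the three lines of the flag, and a further $\Z_2$ is provided by complex conjugation on $\C^3$.

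The hardest step will be closing the argument for $\Ss^3\times\Ss^3$ and $F(\C^3)$. In the former, the nearly Kähler metric is \emph{not} the product metric, so the apparent product decomposition is not an isometric invariant; one must argue directly at the level of the characteristic $3$-form that the three almost complex structures associated to triality are intrinsically equivalent, producing the full $S_3$-factor rather than a smaller subgroup. In the latter, one must check that the $S_3$-permutation of the three twistor projections and the complex conjugation $\Z_2$ combine into precisely the semidirect product $S_3\rtimes\Z_2$ (rather than a direct product or a larger group), which requires a careful tangent-space analysis of how conjugation interacts with the permutations.
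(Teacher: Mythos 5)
There is a genuine gap, in two places. First, your identification of $\isoo(M)$ rests on the claim that the isotropy representation of a strict nearly K\"ahler six-manifold is irreducible over $\R$. This is false for all three non-spherical examples: for $\Ss^3\times\Ss^3=\SU(2)^3/\Delta\SU(2)$ the isotropy representation is two copies of the adjoint representation of $\SU(2)$; for $F(\C^3)=\SU(3)/T^2$ it splits into three two-dimensional summands $\mathfrak{m}_1\oplus\mathfrak{m}_2\oplus\mathfrak{m}_3$; for $\CP[3]$ it splits as $\mathcal{D}^2\oplus\mathcal{D}^4$. (Indeed $\Ss^3\times\Ss^3$ carries both the product metric and the nearly K\"ahler metric as $\SU(2)^3$-invariant metrics, which already rules out isotropy irreducibility.) So this step does not establish $\isoo(M)=G/Z$.

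Second, and more seriously, the closing step cannot work as described. An isometry fixing $p$ does inject into the stabiliser in $\Ort(6)$ of $g$ and the torsion $3$-form $g(\nabla J\cdot,\cdot)$ (up to sign), but that stabiliser contains $\SU(3)$ and is therefore eight-dimensional, whereas the actual isotropy groups of the listed isometry groups are much smaller (e.g.\ $\SO(3)\rtimes S_3$, of dimension $3$, for $\Ss^3\times\Ss^3$). Comparing these two groups therefore yields a strict inclusion, not equality: the point is that a linear map preserving $(g,J,\nabla J)$ at $p$ need not extend to a global isometry. The missing idea --- which is the core of the paper's argument --- is to extract \emph{further} pointwise invariants from the curvature tensor and show every isometry must respect them: for $\Ss^3\times\Ss^3$ the almost product structure $P$, which the curvature identity pins down up to the $\Z_3$-family $\{P,\,-\tfrac12 P\pm\tfrac{\sqrt3}{2}JP\}$; for $\CP[3]$ the structure $P$ detected by the sectional curvature of $\{X,\Jo X\}$, hence the splitting $\mathcal{D}^2\oplus\mathcal{D}^4$; and for $F(\C^3)$ the splitting $V_1\oplus V_2\oplus V_3$, detected as the locus where the holomorphic sectional curvature attains its maximum. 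Only after cutting the isotropy down by these invariants can one normalise an arbitrary isometry, by composing with elements of the candidate group, to the identity (or to conjugation), and conclude via the fact that an isometry is determined by its value and differential at one point. Your sketch for exhibiting the groups and their actions is consistent with the paper, but without the curvature-derived invariants the uniqueness half of the argument does not close.
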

% a projective group PG is the group G quotiented by its center.

Of course, the isometry group of the round sphere is known. To prove the result for the other spaces, we will work as follows. First, we consider the curvature tensor of the space and from this prove that there are certain geometric structures on the manifold that are isometry invariants. These geometric invariants than lead to certain distributions being respected by every isometry. Using isometries of the proposed groups, we can then transform every isometry to an isometry of the proposed group and we are done. In the coming sections we recall constructions of and apply this method to the three remaining six-dimensional homogeneous nearly Kähler spaces. 

\section{The nearly Kähler \texorpdfstring{$\Ss^3\times\Ss^3$}{S³xS³}}
The main source for information about $\Ss^3\times\Ss^3$ is \cite{propertiesofs3s3}.

Let $\Ss^3$ be the three-sphere. 
Thinking of it as the set of unit quaternions, it inherits a Lie group structure. 
The tangent space at a point $p$ can be described as the set $T_p\Ss^3=\{p\alpha:\alpha\in \mathrm{Im}(\mathbb{H})\}$. 
It has a natural Riemannian metric $\li,\ri$, given by $\li p\alpha, p\beta\ri=\frac{1}{2}(\alpha\beta+\beta\alpha)$.

The homogeneous nearly Kähler $\Ss^3\times\Ss^3$ carries the metric $g$ and the almost complex structure $J$ given by
\[
g((p\alpha,q\beta),(p\gamma,q\delta))=\frac{4}{3}(\li \alpha ,\gamma\ri +\li \beta,\delta\ri)-\frac{2}{3}(\li\beta ,\gamma\ri+\li\alpha,\delta\ri)
\]
and
\[
J(p\alpha,q\beta)=\frac{1}{\sqrt{3}}(p(-\alpha+2\beta),q(-2\alpha+\beta)),
\]
respectively. The nearly Kähler $\Ss^3\times\Ss^3$ also carries an additional almost product structure $P$ given by
\[
P(p\alpha,q\beta)=(p\beta,q\alpha).
\]
With these structures, the curvature tensor of $\Ss^3\times\Ss^3$ can be expressed as
\begin{equation}
    \begin{split}
        R(U, V )W &=\frac{5}{12}\Big(g(V, W)U-g(U, W)V\Big)\\
        &\quad+\frac{1}{12}\Big(g(JV, W)JU-g(JU, W)JV-2g(JU, V )JW\Big)\\
        &\quad+\frac{1}{3}\Big(g(P V, W)P U-g(P U, W)P V\\
        &\quad+ g(JP V, W)JP U-g(JP U, W)JPV\Big).
    \end{split}\label{curvtarures3s3}
\end{equation} 

\begin{lemma}\label{proppe}
    The following relations between $g$ $P$, $J$ and $\nabla J$ are satisfied:
    \begin{enumerate}
        \item $P^2=\id$, \label{Pinvolutive}
        \item $g(PX,PY)=g(X,Y)$, \label{Pcompatible}
        \item $g(PX,Y)=g(X,PY)$, \label{Psymmetric}
        \item $PJ=-JP$, \label{PJanticommute}
        \item $P\nabla J(X,Y)+\nabla J(PX,PY)=0$, \label{PGprop}
    \end{enumerate}
    for all $X,Y$ smooth vector fields on $\Ss^3\times\Ss^3$.
\end{lemma}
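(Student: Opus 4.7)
The plan is to verify each identity by direct computation using the quaternionic description of $\mathbb{S}^3\times\mathbb{S}^3$. Fix $(p,q)\in\mathbb{S}^3\times\mathbb{S}^3$ and write tangent vectors as $(p\alpha,q\beta)$ and $(p\gamma,q\delta)$ with $\alpha,\beta,\gamma,\delta\in\mathrm{Im}(\mathbb{H})$.

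Properties (1)--(4) reduce to a single pointwise algebraic check. Assertion (1) is immediate since $P$ swaps the two factors. For (2), the defining expression for $g$ is manifestly invariant under the simultaneous swap $\alpha\leftrightarrow\beta$ and $\gamma\leftrightarrow\delta$: the diagonal pair $(\langle\alpha,\gamma\rangle,\langle\beta,\delta\rangle)$ is exchanged internally and so is the off-diagonal pair $(\langle\beta,\gamma\rangle,\langle\alpha,\delta\rangle)$. For (3), applying only one of these swaps permutes the same four inner products so that $g(PX,Y)$ and $g(X,PY)$ yield the same linear combination. Finally, (4) follows by computing $PJ(p\alpha,q\beta)=\tfrac{1}{\sqrt{3}}(p(-2\alpha+\beta),q(-\alpha+2\beta))$ directly from the definitions and observing that $JP(p\alpha,q\beta)$ is its negative.

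The content lies in (5). For this I invoke the explicit formula for the nearly Kähler tensor $G(X,Y):=\nabla_X J(Y)$ on $\mathbb{S}^3\times\mathbb{S}^3$ established in \cite{propertiesofs3s3}, which expresses $G$ in terms of $g$, $J$, $P$ and the Lie group structure of the two $\mathbb{S}^3$ factors. Substituting this formula into both summands on the left-hand side of (5) and repeatedly applying $P^2=\id$, $PJ=-JP$ and the compatibility $g(P\cdot,P\cdot)=g(\cdot,\cdot)$ just proved, the identity reduces to an algebraic relation that can be verified on representative left-invariant tangent vectors.

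The main obstacle is (5); the rest is pointwise bookkeeping. Conceptually, the key point is that both $J$ and the Lie brackets entering $\nabla J$ distinguish the two $\mathbb{S}^3$ factors, while $P$ swaps them, so swapping the factors in both arguments of $\nabla J$ commutes with $\nabla J$ up to the sign produced by $PJ=-JP$, which is precisely what (5) asserts.
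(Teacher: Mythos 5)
The paper itself offers no proof of this lemma: it is quoted from \cite{propertiesofs3s3}, so there is no in-text argument to compare yours against. Your verifications of items (1)--(4) are correct and complete: they are indeed pointwise algebraic checks from the displayed formulas for $g$, $J$ and $P$, and your computation $PJ(p\alpha,q\beta)=\tfrac{1}{\sqrt{3}}(p(-2\alpha+\beta),q(-\alpha+2\beta))=-JP(p\alpha,q\beta)$ is exactly right.

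Item (5), however, is not actually proved in your write-up, and you concede it is the only item with content. You ``invoke the explicit formula for $G(X,Y)=\nabla_XJ(Y)$ established in \cite{propertiesofs3s3}'' without stating it, and then assert that substitution ``reduces to an algebraic relation that can be verified.'' That is a plan, not a proof: the whole difficulty of (5) is the sign bookkeeping in that substitution, and the closing heuristic (``$P$ swaps the factors, so it commutes with $\nabla J$ up to the sign from $PJ=-JP$'') is precisely the statement to be established, not an argument for it. To close the gap you must write down the formula for $G$ on tangent vectors $(p\alpha,q\beta)$, $(p\gamma,q\delta)$ --- it is an expression of the form $\tfrac{2}{3\sqrt{3}}$ times quaternionic cross products of $\alpha,\beta,\gamma,\delta$ distributed over the two factors --- and then check that swapping $\alpha\leftrightarrow\beta$ and $\gamma\leftrightarrow\delta$ in the arguments and swapping the two output slots produces the negative of the original expression. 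This is a finite computation and the strategy is the standard one (it is how the cited reference proceeds), but as written the essential step is deferred rather than carried out.
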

Moreover, in \cite{propertiesofs3s3}, it was proved that the only three almost product structures that satisfy~$\eqref{curvtarures3s3}$ and all properties in Lemma \ref{proppe} are $P$, $-\frac{1}{2}P+\frac{\sqrt{3}}{2}JP$ and $-\frac{1}{2}P-\frac{\sqrt{3}}{2}JP$. 

The connected component of the identity of the isometry group of $\Ss^3\times\Ss^3$ is covered by $\Ss^3\times\Ss^3\times\Ss^3$,
where an element $\phi_{(a,b,c)}$ acts on a point $(p,q)$ by $\phi_{(a,b,c)}(p,q)=(apc^{-1},bqc^{-1})$.

% The sphere $\Ss^3$ is isomorphic to the Lie group $\SU(2)$, thus we will write $\isoo(\Ss^3\times\Ss^3)=\SU(2)\times\SU(2)\times\SU(2)$. 

This triple product is a double cover of the isometry group, since the two elements $(a,b,c)$ and $(-a,-b,-c)$ correspond to the same isometry.
Therefore, since $\Ss^3=\SU(2)$, 
\[\isoo(\Ss^3\times\Ss^3)=\frac{\SU(2)\times\SU(2)\times\SU(2)}{\Z_2},\]
which we will also denote as $\mathrm{P}(\SU(2)\times\SU(2)\times\SU(2))$.
%  Consequently, $\Ss^3\times\Ss^3$ is a homogeneous Riemannian manifold:
% \[
%     \Ss^3\times\Ss^3=\frac{\Ss^3\times\Ss^3\times\Ss^3}{\Delta\Ss^3},
% \]
% where $\Delta \Ss^3=\{(a,a,a):a\in\Ss^3\}$.

These isometries satisfy $d\phi_{(a,b,c)}\circ J=J\circ d\phi_{(a,b,c)}$ and $d\phi_{(a,b,c)}\circ P=P\circ d\phi_{(a,b,c)}$. 
% Denote by $\mathrm{SL}^{\pm}(2,\R)$ the group of all matrices in $M(2,\R)$ with determinant $\pm1$.
% We can write any matrix of $\mathrm{SL}^{\pm}(2,\R)$ as $\ii^k a$, where $\ii$ is the matrix given in~\eqref{ijksl2R}, $k\in\{0,1\}$ and $a\in\Ss^3$. 
% Thus, we have  
% \[
%     \left(\Ss^3\times\Ss^3\times\Ss^3\right)\rtimes\Z_2\subset\iso(\Ss^3\times\Ss^3).
% \]
By performing permutations of the elements of $\iso_o(\Ss^3\times\Ss^3)$, we obtain some isometries of the nearly Kähler $\Ss^3\times\Ss^3$, which are all in different connected components:
% \begin{tasks}[label=(\arabic*)](2)
% \task \ $\Psi_{0,0}(p,q)=(p,q)$, 
% \task \ $\Psi_{1,0}(p,q)=(q,p)$,
% \task \ $\Psi_{0,2\pi/3}(p,q)=(p q^{-1},q^{-1})$,
% \task \ $\Psi_{1,2\pi/3}(p,q)=(q^{-1},p q^{-1})$,
% \task \ $\Psi_{0,4\pi/3}(p,q)=(q p^{-1},p^{-1})$,
% \task \ $\Psi_{1,4\pi/3}(p,q)=(p^{-1},q p^{-1})$.
% \end{tasks}
\begin{equation}
    \label{isoslsl}
    \begin{alignedat}{2}
        &\Psi_{0,0}(p,q)=(p,q), 
        &&\Psi_{1,0}(p,q)=(q,p),\\
        &\Psi_{0,2\pi/3}(p,q)=(p q^{-1},q^{-1}),
        &&\Psi_{1,2\pi/3}(p,q)=(q^{-1},p q^{-1}),\\
        &\Psi_{0,4\pi/3}(p,q)=(q p^{-1},p^{-1}),\qquad\qquad
        &&\Psi_{1,4\pi/3}(p,q)=(p^{-1},q p^{-1}).
    \end{alignedat}
\end{equation}
Moreover, each one of these isometries satisfies
\[
    J \circ d\Psi_{\kappa,\tau}=(-1)^\kappa  d\Psi_{\kappa,\tau}\circ J,
% \]
% and
% \[
\ \ \ \ \ \ \ P\circ d \Psi_{\kappa,\tau}=d\Psi_{\kappa,\tau}\circ(\cos\tau P+\sin \tau J P).
\]
We aim to prove that these are all the isometries of $\Ss^3\times\Ss^3$.

Note that in \cite{butruille}, Proposition 3.1, Butruille shows the existence of a unique nearly Kähler structure on $\Ss^3\times\Ss^3$.
Therefore, the almost complex structure on $\Ss^3\times\Ss^3$ is unique up to sign.

The following lemma is a well known result.
\begin{lemma}\label{lemmasl2requaltoso21}
    Let $\{\alpha_1,\alpha_2,\alpha_3\}$ and $\{\beta_1,\beta_2,\beta_3\}$ be bases of $\mathrm{Im}(\mathbb{H})$. If $\li\alpha_i,\alpha_j\ri=\li \beta_i,\beta_j\ri$ for all $i$, $j\in \{1,2,3\}$, then there exists a element $c$ in $\Ss^3 \subset \mathbb{H}$ such that $c\alpha_i c^{-1}=\beta_i$. 
    In other words, $\Ss^3\cong\SU(2)$ is the double cover of $\SO(3)$.
\end{lemma}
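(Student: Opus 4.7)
The plan is to realize the conjugation action of $\Ss^3$ on $\mathrm{Im}(\mathbb{H})$ as a double cover of $\SO(3)$ and then apply surjectivity.

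First, I would define the map $\rho:\Ss^3\to \mathrm{GL}(\mathrm{Im}(\mathbb{H}))$ by $\rho(c)(\alpha)=c\alpha c^{-1}$. A short quaternionic computation using $\overline{c\alpha c^{-1}}=c\bar\alpha c^{-1}$ (together with $\bar c = c^{-1}$ for $c\in\Ss^3$) shows that $\rho(c)$ sends $\mathrm{Im}(\mathbb{H})$ to itself and preserves the inner product $\li\alpha,\beta\ri=\tfrac{1}{2}(\alpha\beta+\beta\alpha)$ on imaginary quaternions. Hence $\rho$ is a continuous group homomorphism $\Ss^3\to \Ort(3)$, and since $\Ss^3$ is connected and $\rho(1)=\id$, the image is contained in $\SO(3)$.

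Next, I would identify the kernel: if $\rho(c)=\id$ then $c$ commutes with every element of $\mathrm{Im}(\mathbb{H})$, forcing $c\in\R\cap\Ss^3=\{\pm 1\}$. Thus $\rho$ is a two-to-one local diffeomorphism between connected Lie groups of the same dimension three, which implies it is a surjective covering onto $\SO(3)$.

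With this in hand, the conclusion follows quickly. Given the two bases, the equality of Gram matrices guarantees the existence of a unique linear endomorphism $A\in\Ort(3)$ of $\mathrm{Im}(\mathbb{H})$ with $A(\alpha_i)=\beta_i$; assuming $A$ is orientation-preserving (which is the content in which the lemma is used throughout the paper), we have $A\in\SO(3)$, and surjectivity of $\rho$ produces $c\in\Ss^3$ with $\rho(c)=A$, i.e.\ $c\alpha_i c^{-1}=\beta_i$. The parenthetical statement "$\Ss^3\cong\SU(2)$ is the double cover of $\SO(3)$" is then just a restatement of what $\rho$ accomplishes. Nothing in this argument is subtle; the only mildly technical step is verifying orthogonality of $\rho(c)$, which is a direct computation, so I do not anticipate a real obstacle.
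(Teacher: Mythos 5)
The paper offers no proof of this lemma at all --- it is simply declared ``a well known result'' --- so there is nothing to compare your argument against step by step. Your proof is the standard one: conjugation gives a homomorphism $\rho\colon\Ss^3\to\SO(3)$ with kernel $\{\pm1\}$, hence a surjective covering by connectedness and a dimension count, and the equality of Gram matrices produces the orthogonal map to be lifted. That argument is correct and is surely the proof the authors have in mind.

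The one substantive point is the determinant issue you flag yourself, and it is worth stating bluntly: as literally written the lemma is \emph{false}. Take $\alpha_1=i$, $\alpha_2=j$, $\alpha_3=k$ and $\beta_1=i$, $\beta_2=j$, $\beta_3=-k$; the Gram matrices agree, but conjugation preserves the quaternionic product, so $c\alpha_1c^{-1}=\beta_1$ and $c\alpha_2c^{-1}=\beta_2$ would force $c\alpha_3c^{-1}=c(ij)c^{-1}=\beta_1\beta_2=k\neq\beta_3$. The unique linear map $A$ with $A\alpha_i=\beta_i$ lies in $\Ort(3)$ but need not lie in $\SO(3)$, and only $\SO(3)$ is in the image of $\rho$. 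So the orientation hypothesis you insert is not a convenience but a necessity, and your proof establishes the corrected statement rather than the stated one. In the only place the lemma is invoked, inside the proof of Theorem~\ref{groupofisometries}, the hypothesis does hold: the isometry $\mathcal{F}$ there has been normalized to preserve $J$, $P$ and hence $\nabla J$, which restricts on the relevant factor to a multiple of the cross product of $\mathrm{Im}(\mathbb{H})$, forcing the induced map $\alpha_i\mapsto\beta_i$ to be orientation-preserving. That verification is missing from both the paper and, strictly speaking, your write-up, and it is the one step that should be added before the lemma is applied.
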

With this lemma and Lemma \ref{proppe}, we prove the following statement.
% \begingroup
% \def\thetheorem{\ref{groupofisometries1}}

\begin{theorem}\label{groupofisometries}
    The isometry group of the nearly Kähler $\Ss^3\times\Ss^3$ is the semi-direct product $\mathrm{P}(\SU(2)\times\SU(2)\times\SU(2))\rtimes S_3$, where $S_3$ is the symmetric group of order 6 generated by $\{\Psi_{1,0},\Psi_{1,4\pi/3}\}$.
\end{theorem}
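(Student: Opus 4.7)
I'd proceed by the method outlined in the introduction. Let $\phi$ be an arbitrary isometry of $\Ss^3\times\Ss^3$. Since $d\phi$ preserves the curvature tensor \eqref{curvtarures3s3}, and since Butruille's uniqueness result implies that the nearly Kähler almost complex structure is determined up to sign, I obtain $d\phi\circ J=\pm J\circ d\phi$. Similarly, preservation of $R$ together with the classification of admissible almost product structures recalled right after Lemma~\ref{proppe} forces $d\phi^{-1}\circ P\circ d\phi$ to equal $\cos\tau P+\sin\tau JP$ for some $\tau\in\{0,2\pi/3,4\pi/3\}$.

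Using the commutation relations for the $\Psi_{\kappa,\tau}$ displayed just after \eqref{isoslsl}, I would compose $\phi$ first with a suitable $\Psi_{\kappa,0}$ to absorb the sign in $d\phi\circ J=\pm J\circ d\phi$, and then with the appropriate $\Psi_{0,\sigma}$ to cancel the rotation of $P$. This reduces the problem to an isometry, still denoted $\phi$, whose differential commutes with both $J$ and $P$. Composing further with an element of $\isoo$ we may assume $\phi(e,e)=(e,e)$; it then suffices to prove that $\phi=\id$.

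Under these reductions, $d\phi_{(e,e)}$ preserves the $P$-eigenspace decomposition $T_{(e,e)}(\Ss^3\times\Ss^3)=V_+\oplus V_-$ with $V_\pm=\{(\alpha,\pm\alpha):\alpha\in\mathrm{Im}(\mathbb{H})\}$. Because $J$ anticommutes with $P$ but commutes with $d\phi$, the restriction of $d\phi$ to $V_-$ is the $J$-conjugate of its restriction to $V_+$, so $d\phi_{(e,e)}$ is determined by $A:=d\phi|_{V_+}\in \Ort(V_+)\cong\Ort(3)$. The isotropy subgroup of $\isoo$ at $(e,e)$, namely $\{\phi_{(a,a,a)}:a\in\Ss^3\}/\Z_2$, acts on $V_+$ by conjugation $\alpha\mapsto a\alpha a^{-1}$, which realises the whole of $\SO(3)$ by Lemma~\ref{lemmasl2requaltoso21}. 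Hence once $A\in\SO(3)$ is known, some element of this isotropy has the same value and differential at $(e,e)$ as $\phi$, and the standard rigidity of Riemannian isometries on a connected manifold forces $\phi=\id$.

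The main obstacle is therefore to rule out $A\in\Ort(3)\setminus\SO(3)$. For this, I would invoke the extra isometry-invariant tensor $\nabla J$: property~\ref{PGprop} of Lemma~\ref{proppe} yields $\nabla J(V_+,V_+)\subset V_-$, and after identifying $V_\pm\cong\mathrm{Im}(\mathbb{H})$ this restriction becomes an alternating bilinear map $B:\mathrm{Im}(\mathbb{H})\times\mathrm{Im}(\mathbb{H})\to\mathrm{Im}(\mathbb{H})$. A direct computation should identify $B$, up to a nonzero scalar, with the quaternionic commutator (equivalently, the cross product on $\R^3\cong\mathrm{Im}(\mathbb{H})$), and then preservation of $B$ by $A$ forces $A\in\SO(3)$, since orientation-reversing orthogonal maps reverse the cross product.
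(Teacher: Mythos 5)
Your proposal is correct and follows the same overall strategy as the paper's proof (reduce by $\Psi_{\kappa,\tau}$ using the uniqueness of $J$ and the rigidity of $P$, then translate to fix $(1,1)$, then match the differential with an isotropy element), but the endgame is genuinely different in two respects. First, you work with the $P$-eigenspaces $V_\pm=\{(\alpha,\pm\alpha)\}$, whereas the paper works with the two product factors $\{(\alpha,0)\}$ and $\{(0,\beta)\}$: the paper's computation with $J$ and $P$ shows directly that $\mathcal{F}_*(\alpha,0)=(\beta,0)$, and then everything is read off on one factor. The two splittings carry the same information, so this is a matter of taste. Second, and more substantively, you isolate and resolve the $\Ort(3)$-versus-$\SO(3)$ issue: the paper passes from "$\mathcal{F}_*$ induces a linear isometry of $\mathrm{Im}(\quat)$" to "$\mathcal{F}_*$ is a conjugation" by citing Lemma~\ref{lemmasl2requaltoso21}, which as stated silently requires the two bases to induce the same orientation (conjugation by $c\in\Ss^3$ only realizes $\SO(3)$). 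Your use of $\nabla J$ to exclude orientation-reversing $A$ is a clean way to close that step, and the "direct computation" you defer can be avoided: the isotropy $\{\phi_{(a,a,a)}\}$ acts on $V_+$ and $V_-$ by conjugation and preserves $\nabla J$, so $B$ is an $\SO(3)$-equivariant alternating map $\mathrm{Im}(\quat)\times\mathrm{Im}(\quat)\to\mathrm{Im}(\quat)$, hence a scalar multiple of the commutator; it is nonzero because $V_+\cap JV_+=0$ and $\nabla J(X,Y)$ vanishes only for $Y\in\vct\{X,JX\}$. Two small points to tidy: the rigidity argument shows $\phi$ equals the chosen isotropy element (an element of $\isoo$), not the identity, which is what is actually needed; and one should note that composing with $\Psi_{1,0}$ does not disturb the set $\{P,-\tfrac12P\pm\tfrac{\sqrt3}{2}JP\}$ (it sends $\tau\mapsto-\tau$), so the two-step reduction is consistent with the paper's single composition with $\Psi_{\kappa_0,(-1)^{\kappa_0}\tau_0}$.
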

% \addtocounter{theorem}{-1}
% \endgroup

% \begin{theorem}\label{groupofisometries}
%     The isometry group of the nearly Kähler $\Ss^3\times\Ss^3$ is $\big(\Ss^3\times\Ss^3\times\Ss^3\big)\rtimes\big(\Z_2\times S_3\big)$, where $S_3$ is the symmetric group generated by $\{\Psi_{1,0},\Psi_{1,4\pi/3}\}$
% \end{theorem}
\begin{proof}
    We showed that $\mathrm{P}(\SU(2)\times\SU(2)\times\SU(2))\rtimes S_3\subset\iso(\Ss^3\times\Ss^3)$. 
    Now we show the other inclusion.

    Let $\mathcal{F}$ be an isometry of the nearly Kähler $\Ss^3\times\Ss^3$. 
    As the nearly Kähler structure of $\Ss^3\times\Ss^3$ is unique, there exists a $\kappa_0\in\{0,1\}$ satisfying
    \[
        \mathcal{F}_*J=(-1)^{\kappa_0}J\mathcal{F}_*.
    \]
    The tensor $\mathcal{F}_*P(\mathcal{F}^{-1})_*$ is an almost product structure satisfying all items in Lemma \ref{proppe} and Equation \eqref{curvtarures3s3},
    % , 
    % Lemma \ref{Junique} 
    so we have
    \[
        \mathcal{F}_*P(\mathcal{F}^{-1})_*=\cos \tau_0 P +\sin \tau_0 JP,
    \]
    for some $\tau_0\in\{0,\tfrac{2\pi}{3},\tfrac{4\pi}{3}\}$.
    By taking the composition $\mathcal{F}\circ \Psi_{\kappa_0,(-1)^{\kappa_0}\tau_0}$ we may assume that $\mathcal{F}$ preserves $P$ and $J$. 
    Let $(p_o,q_o)\in\Ss^3\times\Ss^3$ such that $\mathcal{F}(1,1)=(p_o,q_o)$. 
    Then by performing the composition $\mathcal{F}\circ\phi_{(p_o^{-1},q_{o}^{-1},1)}$ we assume $\mathcal{F}(1,1)=(1,1)$. 

    Suppose that ${\mathcal{F}_*}_{(1,1)}(\alpha,0)=(\beta,\gamma)$, for $\alpha,\beta,\gamma \in \mathrm{Im}(\quat)$. 
    As $\mathcal{F}$ preserves $P$, we get ${\mathcal{F}_*}_{(1,1)}(0,\alpha)=(\gamma,\beta)$. 
    Hence, we compute
    \begin{equation*}
        \begin{split}
            {\mathcal{F}_*}_{(1,1)}J(\alpha,0)&=\frac{1}{\sqrt{3}}{\mathcal{F}_*}_{(1,1)}(-\alpha,-2\alpha)\\
            &=-\frac{1}{\sqrt{3}}{\mathcal{F}_*}_{(1,1)}(\alpha,0)-\frac{2}{\sqrt{3}}{\mathcal{F}_*}_{(1,1)}(0,\alpha)\\
            &=-\frac{1}{\sqrt{3}}(\beta,\gamma)-\frac{2}{\sqrt{3}}(\gamma,\beta)\\
            &=-\frac{1}{\sqrt{3}}(\beta+2\gamma,2\beta+\gamma).
        \end{split}
    \end{equation*}
    On the other hand, since $\mathcal{F}$ preserves $J$, we have that this is equal to
    \begin{equation*}
        \begin{split}
            J{\mathcal{F}_*}_{(1,1)}(\alpha,0)&=J(\beta,\gamma)\\
            &=-\frac{1}{\sqrt{3}}(\beta-2\gamma,2\beta-\gamma).
        \end{split}
    \end{equation*}
    Consequently, we obtain $\gamma=0$. 
    Moreover, since $\mathcal{F}$ is an isometry, we know that ${\mathcal{F}_*}$ maps a set $\{(\alpha_1,0),(\alpha_2,0),(\alpha_3,0)\}$ to a set $\{(\beta_1,0),(\beta_2,0),(\beta_3,0)\}$ such that $\li \alpha_i,\alpha_j\ri=\li \beta_i,\beta_j\ri$.

    Now, using Lemma \ref{lemmasl2requaltoso21}, we may take the composition of $\mathcal{F}$ with an isometry of $\mathrm{P}\big(\SU(2)\times\SU(2)\times\SU(2)\big)$ such that we assume ${\mathcal{F}_*}_{(1,1)}(\alpha,0)=(\alpha,0)$ for all $\alpha\in\mathrm{Im}(\mathbb{H})$. 
    Since $\mathcal{F}$ preserves $P$, we have
    \begin{equation*}
        \begin{split}
            {\mathcal{F}_*}_{(1,1)}(\alpha,\beta)&={\mathcal{F}_*}_{(1,1)}(\alpha,0)+{\mathcal{F}_*}_{(1,1)}(0,\beta)\\
                                            &=(\alpha,0)+P(\beta,0)\\
                                            &=(\alpha,\beta).\\
        \end{split}
    \end{equation*}

   As isometries are determined by a point and its differential at that point, we showed that $\mathcal{F}^{-1}$ is in $\mathrm{P}\big(\SU(2)\times\SU(2)\times\SU(2)\big)\rtimes S_3$, thus $\mathcal{F}$ also belongs to this group.
\end{proof}

An element $(a,b,c,\Psi)\in\iso (\Ss^3\times\Ss^3)$  acts on a point $(p,q)$ by
    \begin{equation}\label{actionisometry}
        (a,b,c,\Psi)\cdot(p,q)=\Psi\circ\phi_{( a, b, c)}(p,q).
    \end{equation} 
The isometry group is a semidirect product with group law given by
\begin{equation*}
    \begin{split}
        (a_1,b_1,c_1,\Psi_1)&\star(a_2,b_2,c_2,\Psi_2)      =\left(\sigma_2(a_1,b_1,c_1)\cdot(a_2,b_2,c_2),\Psi_1\circ\Psi_2\right) 
    \end{split}
\end{equation*}
where $\sigma_2$ is the permutation of $(1,2,3)$ in $S_3$ associated to $\Psi_2$.

\section{The nearly Kähler \texorpdfstring{$\CP$}{ℂP³}}

In \cite{liefsoens2024thesis}, it is demonstrated how to describe the nearly Kähler $\CP$ starting from the Hopf fibration $\pi: \Ss^7 \to \CP$, as will also appear soon in \cite{Liefsoens2024}.
We recall the basic facts of this description. Starting from the round $\Ss^7 \subset \C^4$, the Hopf fibration is a Riemannian submersion by putting the Fubini-Study metric $g_\circ$ on $\CP$, and $\C^4$ induces a Kähler almost complex structure $\Jo$ on $\CP$. 
By embedding $\Ss^7 \subset \quat^2$ instead, we can consider the vertical distribution $V(p) = \vct\{i p\}$, and the distributions $\tilde{\mathcal{D}}^2(p) = \vct\{ j p, k p \}$ and $\tilde{\mathcal{D}}^4 = (V \oplus \tilde{\mathcal{D}}^2)^\perp$ on $\Ss^7$. 
In \cite{liefsoens2024thesis}, it is shown that $\mathcal{D}^2 = \pi \circ \tilde{\mathcal{D}}^2$ is well defined, and that an analogous statement holds for $\mathcal{D}^4$. 
By defining an almost product structure $P$ on $\CP$ that acts as minus the identity on $\mathcal{D}^2$ and as the identity on $\mathcal{D}^4$, we can define a new almost product structure $J = P \Jo = \Jo P$. 
By rescaling the Fubini-Study metric on $\mathcal{D}^4$ with a factor $2$, we then get a new metric $g$ such that $(\CP, g, J)$ is a nearly Kähler space. 

Furthermore, in \cite{liefsoens2024thesis}, an expression is given for the Riemann curvature of the nearly Kähler $\CP$. 
By looking at the sectional curvature of the plane spanned by $\{X, \Jo X\}$, it is shown that the almost product structure $P$ is preserved by every isometry of nearly Kähler $\CP$.
Moreover, just as noted above, the classification of Butruille also gives that $J$ is preserved (up to sign) by each isometry. Since $\Jo = JP$, we then find that also $\Jo$ is preserved (up to sign) by each isometry of the nearly Kähler $\CP$. 
In other words, these structures are all geometric invariants of the nearly Kähler $\CP$.

Since the almost product structure is preserved, we find in particular that the two distributions $\mathcal{D}^2(p)$ and $\mathcal{D}^4(p)$ are preserved by each isometry. 
By linking the Kähler and nearly Kähler metric, it was then found in \cite{Liefsoens2024} that the isometries of the nearly Kähler $\CP$ are exactly those isometries of Kähler $\CP$ that preserve the almost product structure $P$. 
The full isometry group of Kähler $(\CP, g_\circ, \Jo)$ is well-known and given by
\[ \PU(4) \rtimes \Z_2 = \{ \varphi \mid \varphi \circ \pi = \pi \circ A \text{ and } A \in \U(4) \} \rtimes \Z_2, \]
where the action of $\Z_2$ comes from the induced action of complex conjugation on $\C^4$. 
Since complex conjugation is an isometry of the nearly Kähler $\CP$ and it has to lie within $\PU(4) \rtimes \Z_2$, we just need the identity component of the isometry group to know the full isometry group. 
It is well-known that the identity component of the isometry group is $\PSp(2)$ (see for instance \cite{onishchik}). 
This can be shown by explicitly considering the elements of $\PU(4)$ that preserve the almost product structure, and can also be seen with an algebraic reasoning, such as the one presented in \cite{onishchik}. This reasoning then gives the following result. 
\begin{theorem}
    The isometry group of the nearly Kähler $\CP$ is \[ \PSp(2) \rtimes \Z_2 =  (\Sp(2) / \Z_2) \rtimes \Z_2. \] 
\end{theorem}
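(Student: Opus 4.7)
My plan, building on the preceding discussion in the section, is to combine three ingredients. First, the earlier analysis identifies the isometries of the nearly Kähler $\CP$ with exactly those isometries of the Kähler $(\CP, g_\circ, \Jo)$ that preserve the almost product structure $P$, and the full Kähler isometry group is classically $\PU(4) \rtimes \Z_2$ with $\Z_2$ generated by complex conjugation. Second, I need to verify that this $\Z_2$ generator actually preserves $P$, so that the whole $\Z_2$ factor survives in the nearly Kähler isometry group. Third, I need to identify the identity component of the $P$-preserving subgroup of $\PU(4)$ with $\PSp(2)$. Combining these yields the claimed group.

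For the second ingredient, I would work in the quaternionic model $\C^4 \cong \quat^2$ and write $\sigma$ for complex conjugation. The map $\sigma$ is $\C$-antilinear, and a direct computation using $j\lambda = \bar{\lambda} j$ for $\lambda\in\C$ gives $\sigma(ip) = -i\sigma(p)$, $\sigma(jp) = j\sigma(p)$, and consequently $\sigma(kp) = -k\sigma(p)$. Hence $\sigma$ preserves both $\tilde{\mathcal{D}}^2(p) = \vct\{jp, kp\}$ and $V(p) \oplus \tilde{\mathcal{D}}^2(p) = \vct\{ip, jp, kp\}$ at every $p \in \Ss^7$, so it descends to a map on $\CP$ preserving $\mathcal{D}^2$ and $\mathcal{D}^4$, and therefore preserves $P$.

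For the third ingredient, the key observation is that $V(p) \oplus \tilde{\mathcal{D}}^2(p) = (\mathrm{Im}\,\quat)\cdot p$ is precisely the tangent space at $p$ to the quaternionic line $\quat\cdot p \subset \quat^2$, i.e.\ to the fiber of the quaternionic Hopf fibration $\Ss^7 \to \Ss^4$. A lift $A \in \U(4)$ therefore preserves the distribution $V \oplus \tilde{\mathcal{D}}^2$ iff it sends quaternionic lines to quaternionic lines, hence induces a well-defined map on the twistor base $\Ss^4 = \quat P^1$. Such $A$ are exactly those that commute with left multiplication by $j$ up to a central $\U(1)$-factor, i.e.\ $A$ is $\quat$-linear modulo scalars, which places it in $\U(1)\cdot\Sp(2) \subset \U(4)$. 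Passing to the projective quotient and using $\Sp(2) \cap \U(1)\cdot I = \{\pm I\}$, the identity component of the $P$-preserving subgroup of $\PU(4)$ is $\Sp(2)/\Z_2 = \PSp(2)$. Together with the $\Z_2$ from step two, this produces $\PSp(2) \rtimes \Z_2$ as the full isometry group.

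The main obstacle is the rigorous implementation of the third step, namely showing that preservation of the distribution $V \oplus \tilde{\mathcal{D}}^2$ at every point of $\Ss^7$ genuinely forces $A \in \U(4)$ to be $\quat$-linear modulo the center. This is the ``explicit'' computation alluded to in the excerpt, which can be carried out by writing $A$ in block form adapted to the splitting $\C^4 = \C^2 \oplus j\C^2$ and solving the resulting commutation relations, or alternatively by invoking the algebraic reasoning of \cite{onishchik}. All remaining steps (the semidirect product structure and the identification with $(\Sp(2)/\Z_2) \rtimes \Z_2$) are then immediate.
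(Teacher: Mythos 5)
Your proposal follows essentially the same route as the paper: reduce to the Kähler isometries of $(\CP,g_\circ,\Jo)$ that preserve the almost product structure $P$, observe that complex conjugation survives so the $\Z_2$ factor persists, and identify the identity component with $\PSp(2)$ via the quaternionic structure on $\C^4\cong\quat^2$ (a step the paper likewise delegates to an explicit computation or to Onishchik's algebraic argument). Your explicit verification that conjugation preserves $V\oplus\tilde{\mathcal{D}}^2$, and the twistor-line characterisation of $\U(1)\cdot\Sp(2)$, are correct fillings-in of details the paper leaves to its references.
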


Explicitly, an element $(A,k)$ acts on a point $\pi(p) \in \CP$ by
\begin{align}\label{actionisometry_CP3}
    (A,k)\cdot \pi(p) &= \pi( \mathrm{Conj}^k(A p) ).
\end{align}
Here, $\mathrm{Conj}^k$ is complex conjugation if $k=1$ and is just the identity if $k=0$.
The isometry group is a semidirect product since for the action in \eqref{actionisometry_CP3} to be a Lie group action, the group law has to be
\begin{equation*}
    \begin{split}
        (A,k_1) &\star (B , k_2) = ( \mathrm{Conj}^{k_2}(A) \, B, k_1 + k_2 ) .
    \end{split}
\end{equation*} 

\section{The nearly Kähler flag manifold}

Let $V$ be an $n$-dimensional $\mathbb{F}$-vector space with $\mathbb{F}=\R,\C$. 
Define the manifold of flags in $V$ as the set
\[
F_{i_1,\ldots,i_k}(V)=\{(V_{i_1},\ldots,V_{i_k}):V_{i_1}\subset\ldots\subset V_{i_k}\subset V\}
\]
where $0<i_1<\ldots<i_k<n$, and $V_{i_j}$ are $\mathbb{F}$-vector subspaces of $V$ of dimension $i_j$ for all $j=1,\ldots,k$.
If $k=n$ we denote it by $F(V)$ and we call it the manifold of \textit{full} flags in $V$. 
The flag manifold is a homogeneous manifold. 
Namely, given an inner product $\li,\ri$ on $V$ we take $\mathrm{O}(V)$ as the linear transformations of $V$ that preserve $\li,\ri$.
Then $\mathrm{O}(V)$ is a Lie group that acts transitively on $F_{i_1,\ldots,i_k}(V)$.

Now we focus on $F(\C^3)=F_{1,2}(\mathbb{C}^3)$. We denote the elements of $F(\C^3)$ as $(\ell,\Pi)$, with $\ell$ and $\Pi$ a complex line and a complex plane, respectively, satisfying $\ell\subset\Pi$.
Let $\li\li,\ri\ri$ be the Hermitian inner product in $\C^3$, that is 
\[
    \li\li x,y\ri\ri=\sum_{i=1}^3x_i\overline{y_i}.
\]
We denote by $\ell^\perp$ and $\Pi^\perp$ the orthogonal complex plane and orthogonal complex line with respect to $\li\li,\ri\ri$, respectively.

There exist three canonical submersions onto $\C P^2$ (see Figure \ref{lines}): 
\[
\pi_1\colon(\ell,\Pi)\mapsto \Pi^\perp, \hspace{2 cm} \pi_2\colon(\ell,\Pi)\mapsto\ell^\perp \cap \Pi , \hspace{2 cm} \pi_3\colon(\ell,\Pi)\mapsto \ell.
\]
\begin{figure}[ht]
    % \centering
    \small
    \def\svgwidth{0.4\columnwidth}
    %% Creator: Inkscape 1.3.2 (091e20e, 2023-11-25, custom), www.inkscape.org
%% PDF/EPS/PS + LaTeX output extension by Johan Engelen, 2010
%% Accompanies image file 'lines.pdf' (pdf, eps, ps)
%%
%% To include the image in your LaTeX document, write
%%   \input{<filename>.pdf_tex}
%%  instead of
%%   \includegraphics{<filename>.pdf}
%% To scale the image, write
%%   \def\svgwidth{<desired width>}
%%   \input{<filename>.pdf_tex}
%%  instead of
%%   \includegraphics[width=<desired width>]{<filename>.pdf}
%%
%% Images with a different path to the parent latex file can
%% be accessed with the `import' package (which may need to be
%% installed) using
%%   \usepackage{import}
%% in the preamble, and then including the image with
%%   \import{<path to file>}{<filename>.pdf_tex}
%% Alternatively, one can specify
%%   \graphicspath{{<path to file>/}}
%% 
%% For more information, please see info/svg-inkscape on CTAN:
%%   http://tug.ctan.org/tex-archive/info/svg-inkscape
%%
\begingroup%
  \makeatletter%
  \providecommand\color[2][]{%
    \errmessage{(Inkscape) Color is used for the text in Inkscape, but the package 'color.sty' is not loaded}%
    \renewcommand\color[2][]{}%
  }%
  \providecommand\transparent[1]{%
    \errmessage{(Inkscape) Transparency is used (non-zero) for the text in Inkscape, but the package 'transparent.sty' is not loaded}%
    \renewcommand\transparent[1]{}%
  }%
  \providecommand\rotatebox[2]{#2}%
  \newcommand*\fsize{\dimexpr\f@size pt\relax}%
  \newcommand*\lineheight[1]{\fontsize{\fsize}{#1\fsize}\selectfont}%
  \ifx\svgwidth\undefined%
    \setlength{\unitlength}{490.94556914bp}%
    \ifx\svgscale\undefined%
      \relax%
    \else%
      \setlength{\unitlength}{\unitlength * \real{\svgscale}}%
    \fi%
  \else%
    \setlength{\unitlength}{\svgwidth}%
  \fi%
  \global\let\svgwidth\undefined%
  \global\let\svgscale\undefined%
  \makeatother%
  \begin{picture}(1,0.54302489)%
    \lineheight{1}%
    \setlength\tabcolsep{0pt}%
    \put(0,0){\includegraphics[width=\unitlength,page=1]{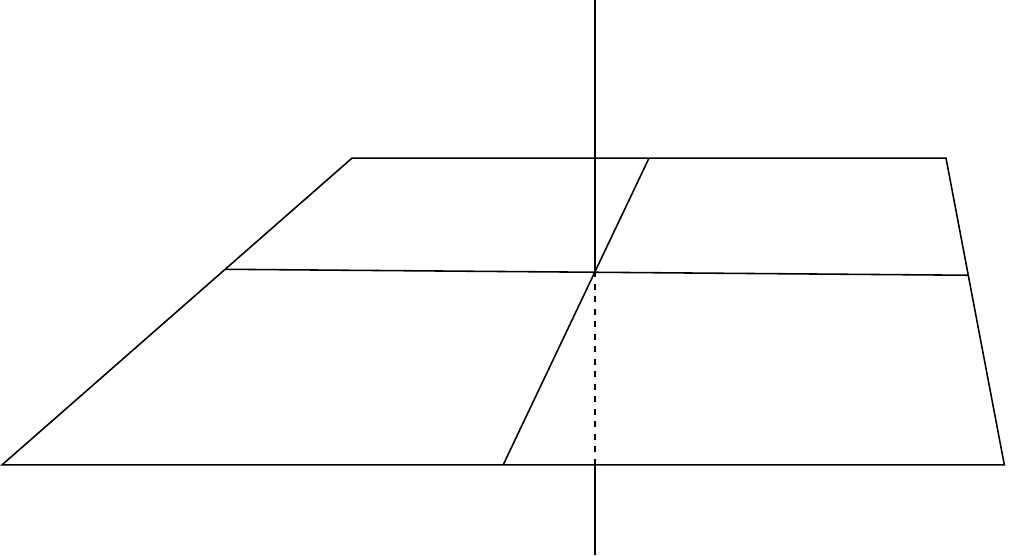}}%
    \put(0.90352753,0.40066053){\makebox(0,0)[t]{\lineheight{1.04999995}\smash{\begin{tabular}[t]{c}$\Pi$\end{tabular}}}}%
    \put(0.45920933,0.11982932){\makebox(0,0)[t]{\lineheight{1.04999995}\smash{\begin{tabular}[t]{c}$\ell$\end{tabular}}}}%
    \put(0.64616218,0.50867858){\makebox(0,0)[t]{\lineheight{1.04999995}\smash{\begin{tabular}[t]{c}$\Pi^\perp$\end{tabular}}}}%
    \put(0.84278045,0.21986802){\makebox(0,0)[t]{\lineheight{1.04999995}\smash{\begin{tabular}[t]{c}$\ell^\perp\cap\Pi$\end{tabular}}}}%
  \end{picture}%
\endgroup%

    \caption{The three possible types of lines in $\mathbb{C}P^2$ we can obtain from a flag $(\ell, \pi)$.}
    \label{lines}
\end{figure} 

The fibers of these three immersions are diffeomorphic to $\Ss^2$. We see it only for $\pi_3$.
The fiber over a line $\ell_o\in\C P^2$ is the subset $\pi^{-1}(\{\ell_o\})=\{(\ell_o,\Pi):\ell_o\subset\Pi\}$ of $F(\C^3)$. 
Take $\Pi$ containing $\ell_o$, then $\ell_o$ and the line $\ell_o^\perp\cap\Pi$ span $\Pi$. 
Hence, we can think of the fiber over $\ell_o$ as the set of lines in $\ell^\perp_o$, which is diffeomorphic to $\C P^1\cong\Ss^2$ (see Figure \ref{fiber}). 

\begin{figure}[ht]
    \scriptsize
    \def\svgwidth{0.4\columnwidth}
    %% Creator: Inkscape 1.3.2 (091e20e, 2023-11-25, custom), www.inkscape.org
%% PDF/EPS/PS + LaTeX output extension by Johan Engelen, 2010
%% Accompanies image file 'fiber.pdf' (pdf, eps, ps)
%%
%% To include the image in your LaTeX document, write
%%   \input{<filename>.pdf_tex}
%%  instead of
%%   \includegraphics{<filename>.pdf}
%% To scale the image, write
%%   \def\svgwidth{<desired width>}
%%   \input{<filename>.pdf_tex}
%%  instead of
%%   \includegraphics[width=<desired width>]{<filename>.pdf}
%%
%% Images with a different path to the parent latex file can
%% be accessed with the `import' package (which may need to be
%% installed) using
%%   \usepackage{import}
%% in the preamble, and then including the image with
%%   \import{<path to file>}{<filename>.pdf_tex}
%% Alternatively, one can specify
%%   \graphicspath{{<path to file>/}}
%% 
%% For more information, please see info/svg-inkscape on CTAN:
%%   http://tug.ctan.org/tex-archive/info/svg-inkscape
%%
\begingroup%
  \makeatletter%
  \providecommand\color[2][]{%
    \errmessage{(Inkscape) Color is used for the text in Inkscape, but the package 'color.sty' is not loaded}%
    \renewcommand\color[2][]{}%
  }%
  \providecommand\transparent[1]{%
    \errmessage{(Inkscape) Transparency is used (non-zero) for the text in Inkscape, but the package 'transparent.sty' is not loaded}%
    \renewcommand\transparent[1]{}%
  }%
  \providecommand\rotatebox[2]{#2}%
  \newcommand*\fsize{\dimexpr\f@size pt\relax}%
  \newcommand*\lineheight[1]{\fontsize{\fsize}{#1\fsize}\selectfont}%
  \ifx\svgwidth\undefined%
    \setlength{\unitlength}{579.57660345bp}%
    \ifx\svgscale\undefined%
      \relax%
    \else%
      \setlength{\unitlength}{\unitlength * \real{\svgscale}}%
    \fi%
  \else%
    \setlength{\unitlength}{\svgwidth}%
  \fi%
  \global\let\svgwidth\undefined%
  \global\let\svgscale\undefined%
  \makeatother%
  \begin{picture}(1,0.60949685)%
    \lineheight{1}%
    \setlength\tabcolsep{0pt}%
    \put(0,0){\includegraphics[width=\unitlength,page=1]{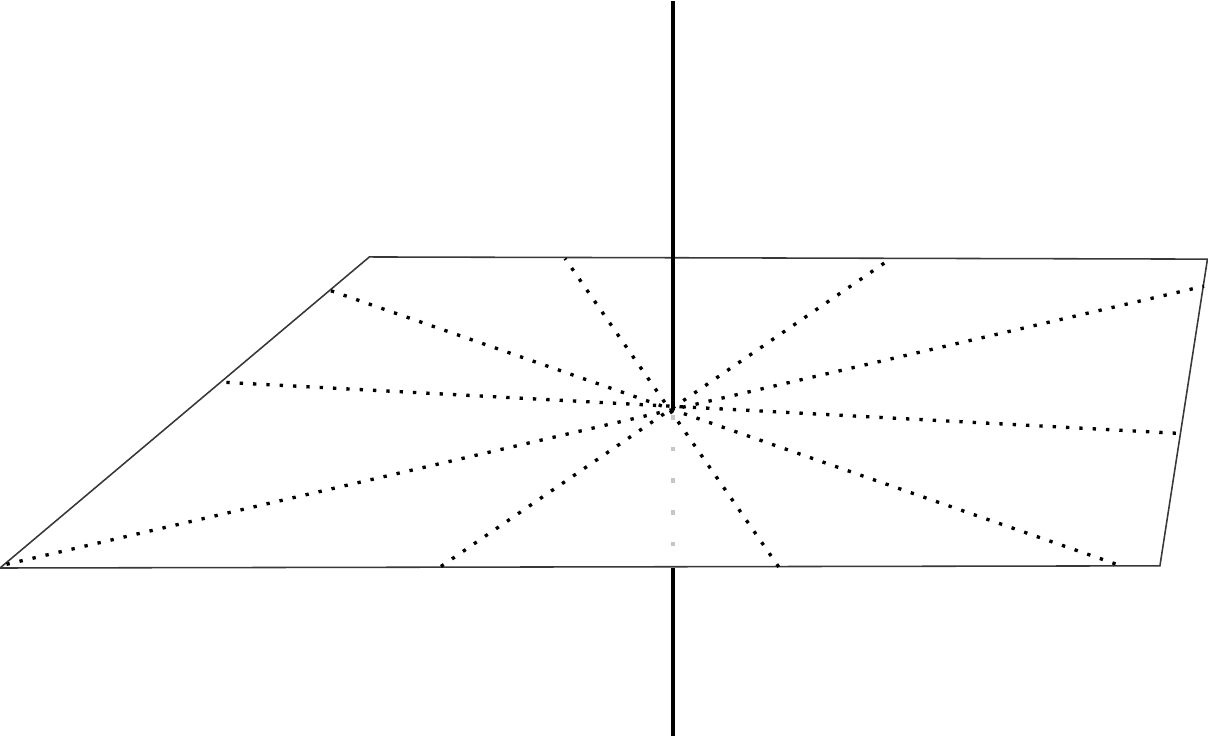}}%
    \put(0.58227893,0.51268135){\makebox(0,0)[lt]{\lineheight{1.04999995}\smash{\begin{tabular}[t]{l}$\ell_o$\end{tabular}}}}%
    \put(0.92002064,0.41275027){\makebox(0,0)[lt]{\lineheight{1.04999995}\smash{\begin{tabular}[t]{l}$\ell_o^\perp$\end{tabular}}}}%
    \put(0,0){\includegraphics[width=\unitlength,page=2]{fiber.pdf}}%
    \put(0.75538653,0.22150119){\makebox(0,0)[t]{\lineheight{1.04999995}\smash{\begin{tabular}[t]{c}$\mathbb{C}P^1$\end{tabular}}}}%
  \end{picture}%
\endgroup%

   \caption{A representation of all the elements of the fiber, projected onto the plane $\ell^\perp_o$.}
   \label{fiber}
\end{figure}

Take the Fubini-Study metric on $\CP[2]$. Then there exists a unique metric $g$ (up to homotheties) on $F(\C^3)$ such that $\pi_1$, $\pi_2$ and $\pi_3$ are all Riemannian submersions.

Let $V_i$ the vertical distribution on $F(\C^3)$ associated to $\pi_i$, that is $V_i=d\pi_i^{-1}(\{0\})$. 
We can easily see that $TF(\C^3)=V_1\oplus V_2 \oplus V_3$. Identifying $V_i\cong T\Ss^2$, we assign the round metric $\li,\ri_i$ at each factor. 
The metric $g$ on $F(\C^3)$ is given by 
\[
        g(X,Y)=\li X_1,Y_1\ri_1+\li X_2,Y_2 \ri_2+\li X_3,Y_3\ri_3
\]  
We assign the same orientation to the fibers of $\pi_1$, $\pi_2$, and $\pi_3$. 
Then we define an almost complex structure $J$ on $F(\C^3)$ as a clockwise rotation in $\tfrac{\pi}{2}$ on each distribution $V_i$. 
Finally, we have that $(F(\C^3),g,J)$ is a nearly Kähler manifold. 

Moreover, the manifolds $(F(\C^3),g_i,J_i)$, with
\begin{equation}
g_i=\begin{cases}
    2g & \text{on $V_i$},\\
    g & \text{the rest},\\
\end{cases}
\ \ \ \ \ \ \ \ \ \ \ \ \ \ \ \ \ 
J_i=\begin{cases}
    J & \text{on $V_i$},\\
    -J & \text{the rest},\\
\end{cases}\label{kahlerstructures}
\end{equation}
are Kähler manifolds. 
The inverse transformation from Kähler to nearly Kähler was noted by Hitchin \cite{hitchin}.

Consider the following six natural diffeomorphisms from $F(\C^3)$ to itself:
\begin{equation}
    \begin{aligned}
        \phi_0\colon (\ell,\Pi)&\mapsto (\ell,\Pi), \ \ \ \ &&\phi_1\colon(\ell,\Pi)\mapsto(\ell^\perp\cap\Pi,\Pi), \ \ \ \ &&\phi_2\colon(\ell,\Pi)\mapsto(\Pi^\perp,\ell^\perp) , \\
        \phi_3\colon(\ell,\Pi)&\mapsto(\ell,(\ell^\perp\cap\Pi)^\perp), \ \ \ \ &&\phi_4\colon(\ell,\Pi)\mapsto(\ell^\perp\cap\Pi,\ell^\perp), \ \ \ \ &&\phi_5\colon(\ell,\Pi)\mapsto (\Pi^\perp,(\ell^\perp\cap\Pi)^\perp).
    \end{aligned}   \label{isometries} 
\end{equation}
These six maps are isometries with respect to the nearly Kähler metric and together form a finite group, isomorphic to $S_3$. 
On the other hand, besides the identity, $\phi_i$ is the only isometry with respect to the Kähler metric $g_i$.

The complex conjugation on $\C^3$ yields the map $\Psi:(\ell,\Pi)\mapsto(\overline{\ell},\overline{\Pi})$, which is an isometry with respect to $g$, $g_1$, $g_2$ and $g_3$.
\begin{theorem}\label{theoremisometrygroup}
    The isometry group of the nearly Kähler $F(\C^3)$ is $\PSU(3)\times S_3\rtimes \Z_2$.
\end{theorem}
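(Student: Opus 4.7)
The strategy parallels the proofs for $\Ss^3\times\Ss^3$ and $\CP$: exhibit the obvious inclusion $(\PSU(3)\times S_3)\rtimes\Z_2\subset\iso(F(\C^3))$, then use an intrinsic geometric invariant of the nearly Kähler structure to force any other isometry into that group. For the easy inclusion, every $A\in\SU(3)$ preserves the Hermitian form and therefore each submersion $\pi_i$ and the metric $g$, while the centre $\Z_3\subset\SU(3)$ acts trivially on flags; each $\phi_i$ of~\eqref{isometries} is a $g$-isometry by construction and commutes with the $\SU(3)$-action because $\perp$ and $\cap$ are unitary-equivariant; finally $\Psi$ is a $g$-isometry that commutes with every $\phi_i$ and conjugates $A\in\SU(3)$ to $\overline A$. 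Together these checks yield the stated semidirect product structure.

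For the reverse inclusion, let $\mathcal F$ be any isometry. The crucial step is to show that $\mathcal F_*$ permutes the unordered triple $\{V_1,V_2,V_3\}$ of vertical distributions. By Butruille's theorem $J$ is unique up to sign, hence $\mathcal F_*J=\pm J\mathcal F_*$. I would then introduce the three almost product structures $P_1,P_2,P_3$ acting as $+\id$ on $V_i$ and $-\id$ on $V_j\oplus V_k$ for $\{i,j,k\}=\{1,2,3\}$; by~\eqref{kahlerstructures} these are precisely the structures relating the nearly Kähler $J$ to the Kähler $J_i$ via $J_i=P_iJ$. Mimicking Lemma~\ref{proppe} and the rigidity statement following~\eqref{curvtarures3s3} in the $\Ss^3\times\Ss^3$ case, I would write the Riemann curvature of $F(\C^3)$ in terms of $g$, $J$ and the $P_i$, and extract from it an intrinsic characterisation of the unordered triple $\{P_1,P_2,P_3\}$. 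Since $\mathcal F_*$ preserves the curvature tensor and $J$ up to sign, it must then permute the $P_i$, and hence the $V_i$.

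Composing $\mathcal F$ with the element of $S_3=\{\phi_0,\ldots,\phi_5\}$ realising the inverse of this permutation, we may assume $\mathcal F_*$ preserves each $V_i$. Then $\mathcal F$ sends fibres of every $\pi_i$ to fibres of $\pi_i$ and, since each $\pi_i$ is a Riemannian submersion, descends to a Fubini-Study isometry $\bar{\mathcal F}_i$ of $\CP[2]$. On the horizontal distribution of $\pi_3$ the nearly Kähler $J$ agrees with the pullback of the Fubini-Study complex structure, so $\bar{\mathcal F}_3$ is holomorphic or anti-holomorphic; composing with $\Psi$ if needed, we may take $\bar{\mathcal F}_3\in\PU(3)=\PSU(3)$. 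Pre-composing $\mathcal F$ with the $\PSU(3)$-lift of $\bar{\mathcal F}_3^{-1}$ reduces us to an isometry that is the identity on $\CP[2]$ through $\pi_3$, still preserves each $V_i$, and commutes with $J$. At any chosen flag, such an isometry acts diagonally on $V_1\oplus V_2\oplus V_3$ by unitary phases; preservation of the intrinsic torsion $\nabla J$ (a non-zero $(3,0)$-form in the nearly Kähler setting) forces the product of these phases to equal one, placing the differential inside the maximal torus $T^2\subset\PSU(3)$ that is the isotropy at the flag. Absorbing this final factor into $\PSU(3)$ reduces $\mathcal F_*$ to the identity at the flag, and the standard rigidity of isometries then gives $\mathcal F=\id$.

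The main obstacle is unquestionably the middle step: producing a genuinely intrinsic, curvature-theoretic characterisation of the triple $\{V_1,V_2,V_3\}$ that does not tacitly appeal to the isometry group we are trying to compute. Once this rigidity is secured, the reduction by the $S_3$ and $\Z_2$ factors and the final absorption into $\PSU(3)$ are formal and mirror the arguments already carried out for $\Ss^3\times\Ss^3$ and $\CP$.
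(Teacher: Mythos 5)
Your overall architecture matches the paper's: establish the easy inclusion, show every isometry permutes the vertical distributions $V_1,V_2,V_3$, kill the permutation with an element of $S_3$, and then reduce the residual isometry using the isotropy and $\nabla J$. However, the step you yourself flag as the ``main obstacle'' --- an intrinsic, curvature-theoretic characterisation of the unordered triple $\{V_1,V_2,V_3\}$ --- is left as a genuine gap, and it is precisely the heart of the paper's proof. The paper fills it with a short pinching computation: the curvature tensor \eqref{curvaturetensor} is already expressed in terms of $g$, $J$ and the three K\"ahler structures $J_1,J_2,J_3$ (there is no need to introduce the product structures $P_i$ separately), and for a unit vector $X$ the holomorphic sectional curvature comes out as
\[
g(R(X,JX)JX,X)=-\tfrac{1}{2}+\tfrac{3}{2}\bigl(g(JJ_1X,X)^2+g(JJ_2X,X)^2+g(JJ_3X,X)^2\bigr),
\]
which by Cauchy--Schwarz is at most $4$, with equality if and only if $X$ lies in some $V_i$. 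Since an isometry preserves $R$ and preserves $J$ up to sign (Butruille's uniqueness, as you correctly invoke), it maps the equality locus to itself, hence permutes the $V_i$. Without this (or an equivalent) explicit computation your argument does not close, because the $V_i$ are defined via the submersions $\pi_i$, i.e.\ extrinsically.

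Your tail of the argument also deviates from the paper's in a way that adds unnecessary machinery: descending through $\pi_3$ to a Fubini--Study isometry of $\CP[2]$, classifying it as holomorphic or anti-holomorphic, and lifting back requires extra justification (in particular, that the descended map together with the fibrewise data determines $\mathcal F$). The paper instead translates so that $\varphi$ fixes $[\id_3]$, observes that $\varphi_*$ acts on each $V_i$ by a rotation $\theta_i$ possibly composed with a reflection, uses that $\nabla J$ acts as a cross product sending $V_1\times V_2$ to $V_3$ to force $\theta_3=-(\theta_1+\theta_2)$, and then absorbs the angles into the isotropy torus $\U(1)\times\U(1)$, leaving only the identity or the conjugation $\Psi$. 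Your phase-product condition is the same constraint in different clothing, so this part is salvageable; the missing curvature characterisation is not.
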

In \cite{shankar2001}, it is shown that the isometry group of the Kähler $F(\C^3)$ is $\PSU(3)\rtimes (\Z_2\oplus\Z_2)$.

To prove this theorem, we first study $F(\C^3)$ as a homogeneous space.
% As mentioned before, the Lie group of linear transformations preserving $\li\li,\ri\ri$ acts transitively on $F(\C^3)$. 
% In our case, that group is $\U(3)$. Moreover, this group acts by isometries, with isotropy subgroup $\U(1)\times\U(1)\times\U(1)$. However, some of these isometries do not preserve the almost complex structure $J$, so w
We take $\SU(3)$, which acts transitively on $F(\C^3)$ by isometries, and preserving $J$. Moreover, this group preserves $J_i$ and $V_i$, for $i=1,2,3$.

We have 
\[
F(\C^3)=\frac{\SU(3)}{\U(1)\times\U(1)}.
\]
The reductive decomposition of $\su(3)$ is given by $\mathfrak{h}\oplus\mathfrak{m}$, where $\mathfrak{h}$ is the Lie algebra of $\U(1)\times\U(1)$, and $\mathfrak{m}$ is the complement of $\mathfrak{h}$ spanned by 
\begin{equation}
    \begin{split}
m_1=\begin{pmatrix}
    0 & -1 & 0\\
    1 & 0 & 0\\
    0 & 0 & 0\\
\end{pmatrix}, \ \ \ \ \ \ 
m_2=\begin{pmatrix}
    0 & i & 0\\
    i & 0 & 0\\
    0 & 0 & 0\\
\end{pmatrix}, \\
m_3=\begin{pmatrix}
    0 & 0 & 1\\
    0 & 0 & 0\\
    -1 & 0 & 0\\
\end{pmatrix}, \ \ \ \ \ \ 
m_4=\begin{pmatrix}
    0 & 0 & i\\
    0 & 0 & 0\\
    i & 0 & 0\\
\end{pmatrix}, \\
m_5=\begin{pmatrix}
    0 & 0 & 0\\
    0 & 0 & -1\\
    0 & 1 & 0\\
\end{pmatrix}, \ \ \ \ \ \ 
m_6=\begin{pmatrix}
    0 & 0 & 0\\
    0 & 0 & i\\
    0 & i & 0\\
\end{pmatrix}. \\
\end{split}
\end{equation}
We denote by $\mathfrak{m}_1=\mathrm{Span}\{m_1,m_2\}$, $\mathfrak{m}_2=\mathrm{Span}\{m_3,m_4\}$ and $\mathfrak{m}_3=\mathrm{Span}\{m_5,m_6\}$.

The map $\pi\colon\SU(3)\to F(\C^3):u\mapsto u \U(1)\times\U(1)$ is a Riemannian submersion, where the metric on $\SU(3)$ is a multiple of the Killing form:
\[
        \li x,y\ri=-\frac{1}{2}\Tr(xy).
\]  
The left translation of $\mathfrak{m}_i$ yields the distribution $V_i$, and the translations of the vectors $m_i$ form an orthonormal frame on $F(\C^3)$.

Let $(\ell,\Pi)\in F(\C^3)$ with $\ell=\{t v_1:t\in\R\}$ for some $v_1 \in\C^3$, and $\Pi=\{t v_1+sv_2:t,s\in\R\}$, where $v_2$ is an element of $\C^3$ such that $\li\li v_1,v_2\ri\ri=0$. It corresponds to the class \[
(\ell,\Pi)=[(v_1\vert v_2\vert v_3)],
\]
in $\SU(3)/\U(1)\times\U(1)$, 
where $v_3$ is orthogonal to $v_1$ and $v_2$ with respect to $\li\li,\ri\ri$.

% The flag manifold $F(\C^3)$ is a naturally reductive space. 
% That is, $\mathrm{Ad}(h)\mathfrak{m}\subset\mathfrak{m}$ for all $h\in \U(1)\times\U(1)$ and $\li [X,Y]_\mathfrak{m},Z\ri=\li X,[Y,Z]_\mathfrak{m}\ri$ for all $X,Y,Z\in \mathfrak{m}$. Naturally reductive spaces are geodesic homogeneous spaces, that is, geodesics are given by orbits of monoparametric subgroups. Namely, $\gamma$ is a geodesic of $F(\C^3)$ through $p$ if and only if 
% \[
%         \gamma(t)=\exp(t X)\cdot p,
% \]  
% for $X\in\mathfrak{m}$.

By identifying $\mathfrak{m}$ with $T_{[\id_3]} F(\C^3)$, we can give the almost complex structure $J$ by defining
\begin{equation*}
    Jm_1=m_2, \ \ \ \ \ \ \ Jm_3=m_4, \ \ \ \ \ \ \ Jm_5=m_6.
\end{equation*}
The other complex structures $J_1$, $J_2$ and $J_3$ can be written similarly using \eqref{kahlerstructures}.

% Another advantage of naturally reductive spaces is that they have a neat expression for their curvature tensor.
% That is, the curvature tensor at the class of the identity element of a naturally reductive space $G/H$, with reductive decomposition $\mathfrak{g}=\mathfrak{h}\oplus\mathfrak{m}$ is given by

% \[
%     R_o(X,Y)Z=\frac{1}{2}[Z, [X, Y ]_\mathfrak{m}]_\mathfrak{m} - [[X, Y ]_\mathfrak{h}
%     , Z]_\mathfrak{m}  +\frac{1}{4}[[Z, Y ]_\mathfrak{m}, X]_\mathfrak{m}-\frac{1}{4}[[Z, X]_\mathfrak{m}, Y ]_\mathfrak{m}
% \]
In \cite{kamil}, it is shown that the curvature tensor is given by
\begin{equation}\label{curvaturetensor}
    \begin{split}
        R(X,Y)Z&=\frac{1}{4}(g(Y,Z)X - g(X,Z)Y )\\
        &\quad- \frac{1}{4} (g(J  Y,Z)J  X - g(J  X,Z)J  Y + 2g(X, J  Y )J  Z)\\
        &\quad+ \frac{1}{2} (g(J_1Y,Z)J_1X - g(J_1X,Z)J_1Y + 2g(X, J_1Y )J_1Z)\\
        &\quad+ \frac{1}{2} (g(J_2Y,Z)J_2X - g(J_2X,Z)J_2Y + 2g(X, J_2Y )J_2Z)\\
        &\quad+ \frac{1}{2} (g(J_3Y,Z)J_3X - g(J_3X,Z)J_3Y + 2g(X, J_3Y )J_3Z).
    \end{split}
\end{equation}

Let $(i,j,k)\in S_3$ be a permutation of three elements. 
We identify the isometries given in \eqref{isometries} as
\begin{equation}
\begin{split}
    \phi_0=(1,2,3),\ \  \phi_1=(2,1,3),\ \  \phi_2=(3,2,1),\\
     \phi_3=(1,3,2), \ \ \phi_4=(2,3,1), \ \ \phi_5=(3,1,2).
\end{split}  \label{identifications3}  
\end{equation}
Then, these isometries map elements in $\SU(3)/\U(1)\times\U(1)$ to permutations of the columns of the matrices in $\SU(3)$. That is,
\[
\phi_i[(v_1\vert v_2 \vert v_3)]=[(v_{\sigma^i_1}\vert \sign(\sigma^i) v_{\sigma^i_2}\vert v_{\sigma^i_3})],
\]
where $\sigma^i$ is the permutation associated to $\phi_i$. The sign of the permutation is added to preserve the orientation, and consequently, the determinant.

On the other hand, the conjugation map $\Psi$ acts on elements of $\SU(3)/\U(1)\times\U(1)$ by
\[
    \Psi[(v_1\vert v_2\vert v_3)]=[(\overline{v_1}\vert\overline{v_2}\vert\overline{v_3})].
\] 

It is known (see for instance \cite{butruille}) that the flag manifold carries a unique (up to sign and homotheties) nearly Kähler structure. 
That is, if $\tilde{g}$ and $\tilde{J}$ are a Riemannian metric and an almost complex structure that satisfies $\tilde{\nabla}\tilde{J}(X,Y)+\tilde{\nabla}\tilde{J}(Y,X)=0$ for all $X,Y\in\mathfrak{X}(F(\C^3))$ then $\tilde{g}=\lambda g$ and $\tilde{J}=\pm J$, for $\lambda>0$.

Note that given any isometry $\varphi$, then $\dd \varphi J \dd \varphi^{-1}$ is an almost complex structure. Therefore, as isometries preserve the Levi-Civita connection, we always have $d\varphi J=\pm Jd\varphi$.
By computing explicitly, we see
\begin{equation*}
    \begin{aligned}
d\phi_1J&=-Jd\phi_1, \hspace*{10 mm}&&d\phi_1J_1=-J_1d\phi_1, \hspace*{10 mm}&&d\phi_1 J_2=-J_3d\phi_1, \hspace*{10 mm}&&d\phi_1J_3=-J_2d\phi_1,\\
d\phi_2J&=-Jd\phi_2, \hspace*{10 mm}&&d\phi_2J_1=-J_3d\phi_2, \hspace*{10 mm}&&d\phi_2 J_2=-J_2d\phi_2, \hspace*{10 mm}&&d\phi_2J_3=-J_1d\phi_2,\\
d\phi_3J&=-Jd\phi_3, \hspace*{10 mm}&&d\phi_3J_1=-J_2d\phi_3, \hspace*{10 mm}&&d\phi_3 J_2=-J_1d\phi_3, \hspace*{10 mm}&&d\phi_3J_3=-J_3d\phi_3,\\
d\phi_4J&= Jd\phi_4, \hspace*{10 mm}&&d\phi_4J_1= J_2d\phi_4, \hspace*{10 mm}&&d\phi_4 J_2= J_3d\phi_4, \hspace*{10 mm}&&d\phi_4J_3= J_1d\phi_4,\\
d\phi_5J&= Jd\phi_5, \hspace*{10 mm}&&d\phi_5J_1= J_3d\phi_5, \hspace*{10 mm}&&d\phi_5 J_2= J_1d\phi_5, \hspace*{10 mm}&&d\phi_5J_3= J_2d\phi_5,\\
d\Psi  J&=-Jd\Psi  , \hspace*{10 mm}&&d\Psi  J_1=-J_1d\Psi  , \hspace*{10 mm}&&d\Psi   J_2=-J_2d\Psi  , \hspace*{10 mm}&&d\Psi  J_3=-J_3d\Psi  .\\
    \end{aligned}
\end{equation*}
We also have that for $i=1,2,3$, the map $\phi_i$ preserves $V_i$ and swaps the remaining two distributions, while $\phi_4$ and $\phi_5$ permute the distributions according to their permutations in $S_3$ given in \eqref{identifications3}. 
Moreover, $\Psi$ preserves the three distributions.

\begin{proof}[Proof of Theorem \ref{theoremisometrygroup}]
    From the group structure induced from the composition of  $\phi_i$ for $i=1,\ldots,5$ and $\Psi$ we already know that $\SU(3)\times S_3\rtimes \Z_2$ is contained in $\iso(F(\C^3))$. 
    Now we prove the other inclusion.

    Let $\varphi$ be an isometry of the nearly Kähler $F(\C^3)$. 
    From \eqref{curvaturetensor} we see that the curvature of a holomorphic plane $\mathrm{Span}\{X,JX\}$ is given by
    \[
    g(R(X,JX)JX,X)=-\frac{1}{2}+\frac{3}{2}\left(g(JJ_1X,X)^2+g(JJ_2X,X)^2+g(JJ_3X,X)^2\right).
    \]
By Cauchy-Schwartz inequality this is less than or equal to 4 and the equality holds if and only if $X$ belongs to $V_i$ for some $i\in\{1,2,3\}$.

Take $X\in V_i$, for $i\in\{1,2,3\}$. 
Then, since $\varphi$ must preserve (up to sign) the almost complex structure $J$, it follows
\begin{equation*}
    \begin{split}
        4&=g(R(X,JX)JX,X)=g(\varphi_*R(X,JX)JX,\varphi_*X)\\
        &=g(R(\varphi_*X,J\varphi_*X)J\varphi_*X,\varphi_*X)\\
        &=-\frac{1}{2}+\frac{3}{2}(g(JJ_1\varphi_*X,\varphi_*X)^2+g(JJ_2\varphi_*X,\varphi_*X)^2+g(JJ_3\varphi_*X,\varphi_*X)^2)
    \end{split}    
\end{equation*}
Then $\varphi_*X$ must belong to $V_j$ for some $j\in\{1,2,3\}$. 
This means $\varphi_*V_i=V_j$. 
% 
% Hence, as $\varphi_*$ preserves $G$, 
Therefore, $\varphi_*$ preserves the partition $V_1\oplus V_2\oplus V_3$.

Now, by doing a composition with one of the isometries in \eqref{isometries}, we may assume that $\varphi_*V_i=V_i$ for all $i=1,2,3$.
Moreover, by doing a translation to the class of the identity  element, we may assume that $\varphi([\id_3])=[\id_3]$.

Now take a basis $\{X_1,JX_1,X_2,JX_2,X_3,JX_3\}$ of $T_{[\id_3]}F(\C^3)$ with $X_i\in V_i$ for all $i=1,2,3$. 
As $\varphi_*$ preserves the distributions $V_i$, we have
\begin{equation*}
    % \begin{aligned}
        \varphi_*X_i=\cos\theta_i X_i+\sin\theta_iJX_i, \ \ \ \ \ \ \ \ \varphi_*JX_i=\pm\cos\theta_i JX_i\mp\sin\theta_iX_i.
    % \end{aligned}
\end{equation*}
Note that from the properties of a nearly Kähler manifold the tensor $ \nabla J$ behaves like a cross product (see \cite{bolton}). It follows $\nabla J(X_i,X_j)\in \varepsilon_{ijk}V_k$ for all $i,j,k=1,2,3$.
Then we may take $X_3=\nabla J(X_1,X_2)$.
Since isometries preserve $\nabla J$, we deduce that $\theta_3=-(\theta_1+\theta_2)$.

Taking a composition with the isometry in the isotropy subgroup $\U(1)\times\U(1)$ given by
\[
\begin{pmatrix}
    e^{\frac{1}{3} i (\theta_1-\theta_2 )} & 0 & 0 \\
    0 & e^{-\frac{1}{3} i (2 \theta_1+\theta_2  )} & 0 \\
    0 & 0 & e^{\frac{1}{3} i (\theta_1 +2 \theta_2 )} \\
\end{pmatrix}
\]
we may assume that $\theta_1=\theta_2=0$.
Finally we have
\begin{equation*}
    % \begin{aligned}
        \varphi_*X_i=X_i, \ \ \ \ \ \ \ \ \varphi_*JX_i=\pm JX_i,
    % \end{aligned}
\end{equation*}
for all $i=1,2,3$.
Then, $\varphi$ must be equal to either the identity map or to the conjugation map~$\Psi$. 

Given that we always took compositions with elements in $\SU(3)\times S_3\rtimes\Z_2$ and the final result is an element of this group, the original isometry $\varphi$ is included in this group.
\end{proof}
We can also think of the action of a permutation $\sigma$ on a element $[v]$ as $[vP^\sigma]$, where $P^\sigma=\sigma(\id_3)$ is the associated permutation matrix. 
The action of an isometry $(A,\sigma,k)$ on an element $[v]$ is $[\mathrm{Conj}^k(AvP^\sigma)]$ where $\mathrm{Conj}$ denotes complex conjugation, as before. 
Therefore, the product rule on the isometry group is a semidirect product, given by
\[
(A_1,\sigma_1,k_1)\star(A_2,\sigma_2,k_2)=(\mathrm{Conj}^{k_2}(A_1)A_2,\sigma_1\sigma_2,k_1+k_2).
\]

\bibliographystyle{abbrv}
\bibliography{main}

\begin{thebibliography}{10}

\bibitem{anarellahomogeneo}
M.~Anarella.
\newblock Extrinsically homogeneous {Lagrangian} submanifolds of the pseudo-nearly {K\"ahler} {$\mathrm{SL}(2,\mathbb{R})\times\mathrm{SL}(2,\mathbb{R})$}.
\newblock {\em Preprint}, 2024.

\bibitem{bolton}
J.~Bolton, F.~Dillen, B.~Dioos, and L.~Vrancken.
\newblock Almost complex surfaces in the nearly {K}\"{a}hler {$S^3\times S^3$}.
\newblock {\em Tohoku Math. J. (2)}, 67(1):1--17, 2015.

\bibitem{butruille}
J.-B. Butruille.
\newblock Classification des vari\'{e}t\'{e}s approximativement k\"{a}hleriennes homog\`enes.
\newblock {\em Ann. Global Anal. Geom.}, 27(3):201--225, 2005.

\bibitem{kamil}
K.~Cwiklinski and L.~Vrancken.
\newblock Almost complex surfaces in the nearly {K}\"{a}hler flag manifold.
\newblock {\em Results Math.}, 77(3):Paper No. 134, 17, 2022.

\bibitem{gray3symmetries}
A.~Gray.
\newblock Riemannian manifolds with geodesic symmetries of order {$3$}.
\newblock {\em J. Differential Geometry}, 7:343--369, 1972.

\bibitem{hitchin}
N.~J. Hitchin.
\newblock K\"{a}hlerian twistor spaces.
\newblock {\em Proc. London Math. Soc. (3)}, 43(1):133--150, 1981.

\bibitem{zekesomehyper}
Z.~Hu, Z.~Yao, and Y.~Zhang.
\newblock On some hypersurfaces of the homogeneous nearly {K}\"{a}hler {$\bf S^3\times \bf S^3$}.
\newblock {\em Math. Nachr.}, 291(2-3):343--373, 2018.

\bibitem{liefsoens2024thesis}
M.~Liefsoens.
\newblock Nearly kähler {$\CP[3]$}: From the hopf fibration to lagrangian submanifolds.
\newblock Master's thesis, KU Leuven, Faculty of Science, June 2022.

\bibitem{Liefsoens2024}
M.~Liefsoens and J.~Van~der Veken.
\newblock Curvature and {L}agrangian submanifolds of nearly {K}ähler {$\mathbb{C}P^3$}.
\newblock preprint.

\bibitem{propertiesofs3s3}
M.~Moruz and L.~Vrancken.
\newblock Properties of the nearly {K}\"{a}hler {$\mathbb S^3\times\mathbb S^3$}.
\newblock {\em Publ. Inst. Math. (Beograd) (N.S.)}, 103(117):147--158, 2018.

\bibitem{onishchik}
A.~L. Onishchik.
\newblock {\em {cyr Topologiya tranzitivnykh grupp preobrazovaniu i}}.
\newblock Fizmatlit ``Nauka'', Moscow, 1995.

\bibitem{shankar2001}
K.~Shankar.
\newblock Isometry groups of homogeneous spaces with positive sectional curvature.
\newblock {\em Differential Geom. Appl.}, 14(1):57--78, 2001.

\bibitem{Vasquez2016}
J.~J. V\'asquez.
\newblock Isospectral nearly {K}\"ahler manifolds.
\newblock {\em Abh. Math. Semin. Univ. Hambg.}, 88(1):23--50, 2018.

\bibitem{wang}
Y.~Zhang, B.~Dioos, Z.~Hu, L.~Vrancken, and X.~Wang.
\newblock Lagrangian submanifolds in the 6-dimensional nearly {K}\"{a}hler manifolds with parallel second fundamental form.
\newblock {\em J. Geom. Phys.}, 108:21--37, 2016.

\end{thebibliography}
\end{document}